\title[Almost all Finsler metrics have infinite dimensional holonomy
group]{Almost all Finsler metrics have \\ infinite dimensional holonomy group}
\author[Balázs Hubicska]{B.~Hubicska}
\author[Vladimir S. Matveev]{V.S.~Matveev}
\author[Zolt\'an Muzsnay]{Z.~Muzsnay}
\address{Balázs Hubicska: \vspace{3pt}}
\email{\url{hubicska.balazs@science.unideb.hu} \vspace{-7pt}}
\address{University of Debrecen, Institute of Mathematics, Pf.~400, Debrecen,
  4002, Hungary}
\address{Vladimir Matveev: \vspace{3pt}}
\email{\url{vladimir.matveev@uni-jena.de} \vspace{-7pt}}
\address{Friedrich-Schiller-Universität, Institut für Mathematik, 07737 Jena,
  Germany}
\address{Zolt\'an Muzsnay: \vspace{3pt}}
\email{\url{muzsnay@science.unideb.hu} \vspace{-7pt}} \address{University of
  Debrecen, Institute of Mathematics, Pf.~400, Debrecen, 4002, Hungary}
\newcommand{\halmazpont}[2]{\left\{\,#1 : #2\,\right\}}
\newcommand{\Hol}{\mathcal{H}ol} 
\newcommand{\hol}{\mathfrak{hol}\hspace{1pt}} 
\newcommand{\ihol}{\mathfrak{hol}^*}
\newcommand{\R}{\mathbb R}
\newcommand{\N}{\mathbb N}
\newcommand*{\p}{\mathcal{P}}
\newcommand*{\I}{\mathcal{I}}
\newcommand*{\E}{\mathcal{E}}
\newcommand*{\B}{\mathbb{B}}
\newcommand*{\F}{\mathcal{F}}
\newcommand*{\T}{\widehat {T}}
\newcommand*{\X}[1]{{\mathfrak X}\left(#1\right)}
\newcommand{\ts}{\textsuperscript}
\newcommand{\diff}[1]{\mathcal{D}i\!f\!\!f(#1)}
\newcommand{\diffp}[1]{\mathcal{D}i\!f\!\!f_+(#1)}
\def\={\!=\!}
\theoremstyle{plain}
\newtheorem{theorem}{Theorem}[section]
\newtheorem{proposition}[theorem]{Proposition}
\newtheorem{property}[theorem]{Property}
\theoremstyle{definition}
\newtheorem{definition}[theorem]{Definition}
\newtheorem{remark}[theorem]{Remark}
\begin{document}

\begin{abstract}
  We show that the set of Finsler metrics on a manifold contains an open
  everywhere dense subset of Finsler metrics with infinite-dimensional
  holonomy groups.
\end{abstract}

\maketitle

\bigskip  

\noindent {\small \emph{Keywords:} 
  Finsler geometry, algebras of vector fields, holonomy, curvature.}
  
\medskip
  
\noindent {\small \emph{2000 Mathematics Subject Classification:} 53C29,
  53B40, 22E65, 17B66.}

\bigskip \bigskip

\section{Introduction}
\label{sec:1}

Finsler metrics appeared already in the inaugural lecture of B.~Riemann in
1854 \cite{Riemann_1854}, under the name \textit{generalized metric}.  At the
beginning of the XXth century, the intensive study of Finsler metrics was
motivated by the optimal transport theory.  A group of mathematicians lead by
C.~Cartheodory aimed to adapt mathematical tools which were effective in
Riemannian geometry (such as affine connections, Jacobi vector fields,
sectional curvature) for a more general situation. P.~Finsler was a student of
Cartheodory and his dissertation \cite{Finsler_1918} is one of the important
steps on this way.

Riemannian geometry is one of the main sources of challenging problems in
Finsler geometry: many Riemannian results are not valid in the Finslerian
setup and one asks under what additional assumptions they are correct.

Our paper studies the holonomy groups of Finsler manifolds. We give precise
definitions later; at the present point let us recall that the Berwald
connection (introduced by L.~Berwald in 1926 \cite{Berwald_1926}) can be
viewed as an Ehresmann connection on the unit tangent bundle $\mathcal{I}M$.
Its holonomy group (at $x\in M$) is the subgroup of the group
$\diff{\mathcal{I}_x}$ generated by the parallel transports along the loops
starting and ending at $x$.

For Riemannian metrics, the Berwald connection specifies to the Levi-Civita
connection.  Study of Riemannian holonomy groups is a prominent topic in
Riemannian geometry and mathematical physics.  It is known (see e.g. A.~Borel
and A.~Lichnerowicz \cite{Borel_Lichn_1952}) that the holonomy group is a
subgroup of the orthogonal group; in particular it is always
finite-dimensional.  Moreover, all possible holonomy groups are described and
classified due in particular to breakthrough results of M.~Berger and
J.~Simons \cite{Berger_1955, Simons}.

In the Finslerian case, the situation is very different and not much is known.
By \cite{Szabo_1981} (see also \cite{Ma2,Matveev1,Vi}) the so-called Berwald
manifolds have finite-dimensional holonomy group. Also the so-called Landsberg
manifolds have finite-dimensional holonomy group \cite{Kozma_2000}, but it is
not jet known whether nonberwaldian Landsberg manifolds exist \cite{Matveev}.
We are not aware of other examples of Finsler metrics with finite-dimensional
holonomy group, it is an interesting problem to find such.
	
From the other side, there are also not many explicit examples of Finsler
manifolds with infinite-dimensional holonomy group
\cite{Hubicska_Muzsnay_tangent_2017, Muzsnay_Nagy_max_2015}, and all these
examples have constant curvature.  A natural and fundamental question in this
context is whether for a generic Finsler manifold the holonomy group is
infinite-dimensional. Its simplest version was explicitly asked by S.-S.~Chern
et al in \cite[page 85]{ChernShen2005}.

In our paper we prove that for a generic Finsler manifold the holonomy group
is infinite-dimensional:

\begin{theorem}
  \label{main_thm_2}
  In the set $\F$ of $C^\infty$-smooth Finsler metrics on a manifold $M$ of
  dimension $n\geq 2$, there exists a subset $\widetilde{\F}$ of Finsler
  metrics with infinite dimensional holonomy group, which is open and
  everywhere dense in any $\mathcal{C}^{m}$-topology, $m\geq 8$.
\end{theorem}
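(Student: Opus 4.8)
The plan is to reduce the theorem to a \emph{local, curvature-theoretic} criterion and then show that this criterion holds on an open dense set. First I would fix a point $x_0\in M$ and a coordinate chart, and recall the known infinitesimal lower bound for the holonomy: associated to the Berwald connection there is a curvature algebra $\curv_{x_0}$ of vector fields on the indicatrix $\I_{x_0}$ (generated by the curvature tensor together with its horizontal and vertical covariant derivatives evaluated at $x_0$), and a theorem of the Muzsnay--Nagy type guarantees that if the Lie algebra generated by $\curv_{x_0}$ is infinite dimensional, then so is $\hol_f(M)$. Thus it suffices to produce, on an open dense set in the $\mathcal{C}^m$-topology, Finsler metrics for which this curvature Lie algebra at some point is infinite dimensional.

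Next I would make the infinite-dimensionality of the curvature Lie algebra detectable by a finite jet. The key observation is that $\curv_{x_0}$ and finitely many of its brackets depend only on the $k$-jet of the metric at $x_0$ (for some universal $k$, which will dictate the $m\geq 8$ threshold), and that "the Lie algebra generated by $\curv_{x_0}$ is infinite dimensional" follows from an \emph{open, nonvanishing condition} on a suitable finite collection of iterated brackets — concretely, one exhibits finitely many vector fields $\xi_1,\dots,\xi_N\in\curv_{x_0}$ and iterated Lie brackets whose values at a single point $v\in\I_{x_0}$, together with the polynomial degrees in $v$, are forced to be linearly independent whenever a certain Jacobian-type polynomial $P$ in the jet coefficients is nonzero. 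This $P\not=0$ is an open condition on the $k$-jet, hence $\Fil{\widetilde\F}$ defined as the set of metrics whose $k$-jet at some point satisfies $P\not=0$ is open in $\mathcal{C}^m$, $m\geq k$; and it is contained in the set of metrics with infinite-dimensional holonomy.

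For density I would use a perturbation argument: given any Finsler metric $F$ and any $\varepsilon>0$, I want a $\mathcal{C}^m$-$\varepsilon$-close Finsler metric $\widetilde F$ with $P\not=0$ at some point. Fix $x_0$ and a bump function supported in a small coordinate ball; add to $F$ a small perturbation of the form (bump)$\cdot$(explicit fibrewise-homogeneous term) whose coefficients are chosen so that the prescribed jet coordinates — the ones entering $P$ — take generic values, while the strong convexity and smoothness of $F$ are preserved for small perturbation amplitude (this is where the quantitative openness of the Finsler condition is used). Since $P$ is a nonzero polynomial, generic jet coordinates give $P\not=0$, so $\widetilde F\in\widetilde\F$ and $\|\widetilde F-F\|_{\mathcal{C}^m}<\varepsilon$.

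The main obstacle, and the technical heart of the argument, is the second step: \emph{isolating a single nonzero polynomial $P$ in the jet of $F$ that certifies infinite-dimensionality of the curvature Lie algebra.} One must unwind the definition of the Berwald curvature and its covariant derivatives explicitly enough to compute the leading (in $v$) behaviour of a chosen family of brackets, verify that their $v$-degrees are unbounded, and extract from the nondegeneracy of the associated linear system the explicit polynomial obstruction — all while keeping the computation at the level of a \emph{finite} jet so that openness and the $m\geq 8$ bound are genuinely justified. Once $P$ is in hand, openness is immediate and density is the routine bump-function perturbation sketched above.
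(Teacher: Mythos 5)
Your overall architecture (an open finite-jet condition that certifies infinite-dimensional holonomy, plus a bump-function perturbation for density) matches the paper's, but two essential ingredients are missing, and both are where the actual work lies. First, you defer the construction of the certificate $P$ to ``the technical heart'' and propose to obtain it by exhibiting iterated brackets of curvature vector fields with unbounded degrees in $v$ --- a direct computation that the paper deliberately avoids. The paper's certificate is instead the \emph{3-jet generating property} of the finite family $\{\mathcal R_{ij},\ \nabla_{k}\mathcal R_{ij},\ \nabla_{kl}\mathcal R_{ij},\ \nabla_{klh}\mathcal R_{ij}\}$ at one point of the indicatrix, i.e.\ surjectivity onto the finite-dimensional space $J^3_y(\X{\I_{x_0}})$; this is a nonvanishing-determinant condition on the $8$-jet of the metric (curvature needs the $5$-jet, three more covariant derivatives give $8$, whence $m\geq 8$). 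It is converted into infinite-dimensionality by the separate, purely Lie-theoretic Theorem \ref{thm:no_transitive_finite}: any Lie algebra of vector fields that is 3-jet generating at a point is infinite dimensional, proved by an order-of-singularity argument with no curvature computation at all. Without that theorem, or without actually carrying out your bracket computation with unbounded $v$-degrees, your second step is not a proof.

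Second, and more seriously for density: knowing that $P$ is ``a nonzero polynomial'' in abstract jet coordinates does not suffice, because the jets of Finsler metrics form a constrained subset (positive $1$-homogeneity in $y$, strong convexity), and $P$ could vanish identically on that subset; your generic bump perturbation only ranges over attainable jets. You need a witness --- one explicit Finsler metric at which the certificate holds --- and nothing in your sketch produces one. The paper's Section 3 exists precisely for this: the standard Funk metric on $\B^n$ has a jet-generating infinitesimal holonomy algebra (reduced, via totally geodesic $2$-planes and a polarization identity for mixed derivatives, to the known two-dimensional result that $\ihol_o(F_{\B^2})$ contains the dense Fourier algebra). Density is then obtained not by genericity of jet coordinates but by the explicit interpolation $F_t^2=(1-t)F^2+t\bar F^2$ toward the glued-in Funk metric: the relevant determinant is a rational function of $t$, nonzero at $t=1$, hence nonzero for all but finitely many $t$, so arbitrarily small $t>0$ gives a metric in $\widetilde{\F}$ that is $C^m$-close to $F$. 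Without the Funk witness and this one-parameter algebraic argument, your density step does not close.
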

  
What we essentially prove is that one can $C^\infty$ small perturb any
Finsler metric $F$ at a neighbourhood of any point $x\in M$ such that for
every open nonempty subset $U \subseteq M$ containing $x$, the perturbed
Finsler structure $(U, F_t)$ has infinite dimensional holonomy
group. Moreover, the metric $F_t$ has an open subset in the space of all
$C^\infty$-Finsler metrics equipped with $C^{m\ge 8}$-topology such that
every Finsler metric $F''$ in this set also has infinite dimensional
holonomy group.  Theorem \ref{main_thm_2} is true mircrolocally and on the
level of germs (see Remark \ref{rem:germs}). The perturbation is given by a
formula.  We show that for almost every $t \in [0,1]$, the perturbation
$F_t$ on its indicatrix $\I_x^{\, t}$ has the full infinity-jet at every
point $y\in \I_x^{\, t}$. Based on this, we conjecture that in the generic
case, the holonomy group of a Finsler manifold coincides with the full
diffeomorphism group of the indicatrix.

Our results imply that, in contrast to the Riemannian case, the closure of
the holonomy group is not a compact group for most Finlser metric.  Similar
results for the \emph{linear holonomy group} (defined via the linear
parallel transport) were recently obtained in \cite{Ivanov_Lytchak_2019}.

The proof is organised as follows. We first show that the standard Funk
metric $F_{Funk}$ has `sufficiently large' holonomy algebra. For dimension
2, this was known \cite{Hubicska_Muzsnay_tangent_2017,
  Muzsnay_Nagy_max_2015}, we generalise these results to all
dimensions. Next, we employ the trick from \cite[\S 3.1]{relativity} and
show that with the help of $F_{Funk}$ one can perturb an arbitrary Finsler
metric such the result also has `sufficiently large' holonomy
algebra. Then, we show that if the holonomy algebra is `sufficiently large'
then it is infinite-dimensional. This step is in fact a general statement
about algebras of smooth vector fields and possibly can be applied
elsewhere. Therefore, let us formulate it as Theorem
\ref{thm:no_transitive_finite} below, this will also explain what we
understand by `sufficiently large'.

An algebra $\mathfrak{g}$ of vector fields on $U\subseteq \mathbb{R}^n$ is
called \emph{$3$-jet generating} at $x\in U$, if the set of 3rd jets of these
vector fields at the point $x$ coincides with the space of all 3rd jets of
vector fields at $x$ (see Definition \ref{def:gen}).  In other words, every
vector field can be approximated at $x$ with order three by a vector field
from the algebra. For example, if the algebra is \emph{locally transitive} at
$x$, i.e., if the elements of the algebra at $x$ span the whole $T_xU$, then
it is $0-$jet generating.

\begin{theorem}
  \label{thm:no_transitive_finite}
  Let $\mathfrak{g}$ be a Lie algebra of vector fields on a manifold $U$. If
  there exists a point where it is 3-jet generating, then $\mathfrak{g}$ is
  infinite-dimensional.
\end{theorem}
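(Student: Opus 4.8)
The plan is to work near a point $x$ at which $\mathfrak{g}$ is $3$-jet generating, to fix a chart so that we may assume $U\subseteq\mathbb{R}^n$ and $x=0$, and to show that there $\mathfrak{g}$ is in fact $k$-jet generating for \emph{every} $k\ge 0$. Write $P_j$ for the finite-dimensional space of vector fields on $\mathbb{R}^n$ with homogeneous polynomial coefficients of degree $j$; via Taylor expansion the space $J^k_0$ of $k$-jets of vector fields at $0$ is $\bigoplus_{j=0}^{k}P_j$, the jet evaluation $j^k_0\colon\mathfrak{g}\to J^k_0$ is linear, and $[P_a,P_b]\subseteq P_{a+b-1}$. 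Since the truncations $J^3_0\to J^k_0$ are onto, the hypothesis already gives that $\mathfrak{g}$ is $k$-jet generating at $0$ for $0\le k\le 3$. Once this is established for all $k$, the theorem follows: $j^k_0$ is then surjective, hence $\dim\mathfrak{g}\ge\dim J^k_0=n\binom{n+k}{n}\to\infty$.

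For the inductive step assume $\mathfrak{g}$ is $k$-jet generating at $0$ with $k\ge 3$. A short linear-algebra argument reduces the statement for $k+1$ to the inclusion $P_{k+1}\subseteq j^{k+1}_0(\mathfrak{g})$, where $P_{k+1}=\ker\bigl(J^{k+1}_0\to J^k_0\bigr)$: for an arbitrary $w\in J^{k+1}_0$ one first chooses $X_1\in\mathfrak{g}$ matching the image of $w$ in $J^k_0$, and then corrects the error $w-j^{k+1}_0X_1\in P_{k+1}$ by a suitable element. To produce elements of $\mathfrak{g}$ whose $(k+1)$-jet lies in $P_{k+1}$, consider the subalgebra $\mathfrak{g}_0^{(2)}=\{X\in\mathfrak{g}: X \text{ vanishes to order }\ge 2\text{ at }0\}$; by the induction hypothesis its $k$-jets fill $\bigoplus_{2\le j\le k}P_j$. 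The local observation is that if $Y,Z\in\mathfrak{g}_0^{(2)}$ have $j^k_0Y=A\in P_2$ and $j^k_0Z=B\in P_k$, then $j^{k+1}_0[Y,Z]=[A,B]\in P_{k+1}$: writing $Y=A+R_Y$ and $Z=B+R_Z$ with $R_Y,R_Z$ vanishing to order $\ge k+1$, the estimate $[P_a,P_b]\subseteq P_{a+b-1}$ forces each of $[A,R_Z]$, $[R_Y,B]$, $[R_Y,R_Z]$ to vanish to order $\ge k+2$, while $[A,B]$ is homogeneous of degree $k+1$. Hence the linear subspace $j^{k+1}_0(\mathfrak{g})$ contains $[P_2,P_k]$, and the induction closes once we establish the following:
\[
(\star)\qquad [P_2,P_k]=P_{k+1}\quad\text{for }n\ge 2,\ k\ge 2.
\]

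To prove $(\star)$ I would use the $\mathfrak{gl}_n$-module structure: since $[P_1,P_j]\subseteq P_j$, the Jacobi identity shows that $N:=[P_2,P_k]$ is a submodule of $P_{k+1}\cong S^{k+1}(\mathbb{R}^{n*})\otimes\mathbb{R}^n$ over $P_1\cong\mathfrak{gl}_n$. The divergence $\operatorname{div}\colon P_{k+1}\to S^{k}(\mathbb{R}^{n*})$ is $\mathfrak{gl}_n$-equivariant and onto, and for $n\ge 2$ the module $P_{k+1}$ decomposes as $\ker\operatorname{div}$ plus a complement isomorphic to $S^{k}(\mathbb{R}^{n*})$, both summands being irreducible and non-isomorphic; hence $N$ is $0$, one of these two summands, or all of $P_{k+1}$. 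It then suffices to exhibit a bracket with nonzero divergence, e.g. $[x_1x_2\partial_1,\,x_1^{k}\partial_1]=(k-1)\,x_1^{k}x_2\,\partial_1$, and a nonzero divergence-free bracket, e.g. $[x_1^2\partial_2,\,x_2^{k}\partial_1]=k\,x_1^2x_2^{k-1}\partial_1-2\,x_1x_2^{k}\partial_2$: the first excludes $N\subseteq\ker\operatorname{div}$, the second excludes $N$ lying in the complementary summand, so $N=P_{k+1}$. (Alternatively $(\star)$ can be checked by a direct, slightly longer computation expressing every degree-$(k+1)$ monomial vector field as a combination of such brackets.) I expect $(\star)$ — equivalently, the statement that the full symbol carried by the $3$-jet prolongs to all orders — to be the only genuinely substantive point; the rest is the elementary degree estimate for brackets together with linear algebra on jet spaces.
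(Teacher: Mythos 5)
Your argument is correct, but it is genuinely different from the one in the paper. The paper argues by contradiction: assuming $\mathfrak{g}$ is finite-dimensional, it invokes real-analyticity of finite-dimensional transitive Lie algebra actions, introduces the order of singularity at $x_0$, shows that an element $v_1$ of maximal singularity order (which is $\geq 2$ by the 3-jet generating hypothesis) must commute with the whole isotropy-type subalgebra $\mathfrak{g}_1$, and then derives a contradiction at a nearby point $\hat{x}_0$ where $v_1$ does not vanish. You instead prove directly, by prolongation, that 3-jet generation at $x$ forces $k$-jet generation at $x$ for every $k$, whence $\dim\mathfrak{g}\geq\dim J^k_x$ is unbounded; the engine is the bracket degree estimate $[P_a,P_b]\subseteq P_{a+b-1}$ together with the identity $[P_2,P_k]=P_{k+1}$ (understood as linear span), which you establish via the $\mathfrak{gl}_n$-module decomposition of $S^{k+1}(\mathbb{R}^{n*})\otimes\mathbb{R}^n$ into the divergence-free part and a copy of $S^k(\mathbb{R}^{n*})$, plus two explicit brackets ruling out both proper submodules. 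I checked the two sample brackets and the jet bookkeeping ($j^{k+1}_0[Y,Z]=[A,B]$ because the error terms vanish to order $\geq k+2$ when $k\geq 2$); they are right. Your route buys more than the paper's: it is purely local and formal at the single point (no analyticity, no auxiliary point $\hat{x}_0$), and it yields the stronger conclusion that the algebra is jet generating to all orders at $x$ --- which is exactly the kind of statement the authors separately need in Proposition \ref{prop:Funk_generating} and conjecture about. The paper's route avoids representation theory entirely and uses finite-dimensionality only through the boundedness of the singularity order.

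One small point to patch: your key identity $(\star)$ is stated only for $n\geq 2$, and your module-theoretic proof genuinely needs $n\geq 2$, so as written the case $\dim U=1$ is not covered. It is harmless because your induction only ever uses $k\geq 3$, and for $n=1$ one has $[x^2\tfrac{d}{dx},x^k\tfrac{d}{dx}]=(k-2)x^{k+1}\tfrac{d}{dx}\neq 0$ for $k\geq 3$, which spans the one-dimensional $P_{k+1}$; add that one line (or cite Lie's classification in dimension one, as the paper does) and the proof is complete in all dimensions.
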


If dimension $U$ is 1, the result is known and is due to Sophus Lie, see
e.g.~\cite[Theorem 2.70]{Olver}. As examples show (see e.g.~the tables at the
back of \cite{Olver} where vector field algebras of arbitrary finite dimension
are listed), the 3-jet generating property is important.

\subsection*{Acknowledgement} The cooperation of V.M. with B.H. and Z.M. was
started within and was supported by the exchange program of DAAD (PPP Ungarn;
project number 57447379) and TKA (project number 307818), ÚNKP-19-3-I-DE-531
New National Excellence Program of the Ministry for Innovation and Technology.
V.M. thanks the University of Debrecen, B.H. and Z.M. thank the
Friedrich-Schiller-Universität for their hospitality. We are very grateful to
Boris Kruglikov and Peter Olver for useful discussions.  \bigskip

\section{Preliminaries}
\label{sec:2}

\bigskip

Let $M$ be an $n$-dimensional manifold, $TM$ its tangent manifold,
$\pi: TM \to M$ the canonical projection.  Local coordinates $(x_i)$ on $M$
induce local coordinates $(x_i, y_i)$ on $TM$.  The $k$th order jet of a
function $f\in C^\infty(M)$ (resp. smooth vector field $V\in \mathfrak X(M)$)
at $x\in M$ will be denoted by $j^k_x(f)$ (resp.~$j^k_x(V)$). In local
coordinates, the $k$th order jet can be viewed as the collection of all
derivatives of the function or of the vector field up to the order $k$. For
example, the $1$st order jet of a function at a point $x$ can be viewed as
$n+1$ numbers
\begin{math}
  \left(f, \tfrac{\partial f}{\partial x_1}, \dots , \tfrac{\partial
      f}{\partial x_n} \right),
\end{math}
and of a vector field $V$ as $n(n+1)$ numbers
\begin{math}
  \left(V_1, \dots,V_n, \tfrac{\partial V_1}{\partial x_1}, \dots ,
    \tfrac{\partial V_n}{\partial x_n}\right).
\end{math}
\medskip

\subsection{Finsler manifolds, connection}
\label{sec:finsl}   
 
\medskip

\noindent
The function $F \colon TM \to \mathbb{R}_+$ is called a \textit{Finsler
  metric}, if it is a positively 1-homogeneous continuous function,
$C^\infty$-smooth on $\T M=TM\!\setminus\!\{0\}$ and
\begin{displaymath}
\label{metric_coeff}
  g_{ij}= \frac{\partial^2 \E}
  {\partial y_i\partial y_j}
\end{displaymath}
is positive definite at every $y\in \T_xM$, where $\E:=\frac{1}{2}F^2$
denotes the \textit{energy function} of $F$. A pair $(M, F)$
is called a \textit{Finsler manifold}. The hypersurface of $T_xM$
defined by
\begin{equation}
  \label{eq:2}
  \I_x \! =\! \halmazpont{y \in T_xM}{F_x(y) \! = \! 1}
\end{equation}
is called the \emph{indicatrix} at $x \in M$.  The \emph{geodesics} of a
Finsler manifold $(M, F)$ are given by the solutions of the following
system of second order ordinary differential equations
\begin{equation}
  \label{eq:geodesic}
  \ddot{x}^i + 2G^i(x,\dot x)=0,
\end{equation}
where the geodesic coefficients $G^i=G^i(x,y)$ are determined by the
formula
\begin{equation}
  \label{eq:G_i}
  \hphantom{\qquad i = 1,\dots, n,}   
  G^i = \frac{1}{4} g^{il}\Bigl(2\frac{\partial  g_{jl}}{\partial x_k} 
  - \frac{\partial g_{jk}}{\partial x_l}  \Bigr) \, y_jy_k,
  \qquad i = 1,\dots, n.
\end{equation}
The (Berwald) parallel translation on a Finsler manifold can be introduced by
considering the Ehresmann connection: the horizontal distribution is
determined by the image of the horizontal lift $T_xM\to T_{(x,y)}TM$
defined in a local basis as
\begin{equation}
  \label{eq:lift2}
  \delta_i:= \left(\frac{\partial}{\partial x_i} \right)^{\!\! h} =
  \frac{\partial}{\partial x_i} -G^k_i  \frac{\partial}{\partial y_k},
\end{equation}
where $y\in \T_xM$ and
\begin{math}
  G^i_j = \frac{\partial G^i}{\partial y_j}.
\end{math}
We have the decomposition
\begin{displaymath}
  TTM = \mathcal{H} \oplus \mathcal{V},
\end{displaymath}
where $\mathcal{V}=\ker \pi_*$ is the vertical distribution. The corresponding
projectors are denoted by $h$ and $v$. The \textit{horizontal Berwald
  covariant derivative} of a vertical vector field $\xi$ with respect to a
vector field $X\in \mathfrak{X}(M)$ is defined by
\begin{displaymath}
\nabla_{X} \xi = [X^h , \xi ].
\end{displaymath}
In local coordinates, if
\begin{math}
  \xi=\xi^{i}(x,y) \frac{\partial}{\partial y_{i}}
\end{math}
and $X(x)=X^{i} \frac{\partial}{\partial x_{i}}$, then 
\begin{displaymath}
  \label{eq:covder}
  \nabla_{X} \xi = \left( \frac{\partial \xi ^{i}}{\partial x_{j}}
    - G^{k}_j  \frac{\partial \xi ^{i}}{\partial y_{k}}
    +\frac{\partial G^{i}_j }{\partial y_{k}} \xi^{k}  \right)
  X_{j} \frac{\partial}{\partial y_i}.
\end{displaymath}

\bigskip

\subsection{Parallel translation and  curvature}
\label{sec:finsler2}  \ 

\medskip

\noindent
Parallel vector fields along a curve $c$ are characterized by the property
that their covariant derivative vanishes. Parallel translation can be
obtained through the following geometric construction: the horizontal lift
of a curve $c\colon [0,1]\to M$ with initial condition $X_0\in T_{c(0)}M$ is
a curve $c^h\colon [0,1]\to TM$ such that $\pi \circ c^h \= c$,
$\frac{d c^h}{dt}\=(\frac{d c}{dt})^h$ and $c^h(0) \= X_0$. Then the
parallel translation of $X_0$ along the curve $c$ from $c(0)$ to $c(1)$ is
\begin{equation}
   \label{eq:parallel_3}
   \p_{c}(X_0) = c^h(1). 
\end{equation}
The horizontal distribution $\mathcal{H}$ is, in general, non-integrable. The
obstruction to its integrability is given by the \emph{curvature tensor}
\begin{math}
   R=\frac{1}{2}[h,h]
\end{math}
which is the Nijenhuis torsion of the horizontal projector $h$ associated to
the subspace $\mathcal{H}$. The \emph{curvature tensor} field is defined by
\begin{equation}
  \label{eq:R}
  R = R^i_{jk}(x,y) \, dx_j \! \otimes dx_k \! \otimes \frac{\partial}{\partial
    y_i}
\end{equation}
where
\begin{equation}
\label{eq:curv_tensor}
  R^i_{jk} =  \frac{\partial G^i_j}{\partial x_k}
  - \frac{\partial G^i_k}{\partial x_j} + G_j^m G^i_{k m}
  - G_k^m G^i_{j m}
\end{equation} 
in a local coordinate system with
$G^i_{jk}=\frac{\partial G^i_j}{\partial y_k}$.

\begin{remark}
  \label{remark:jet}
  From formula \eqref{eq:G_i} we get that the geodesic coefficients
  $G^i(x,y)$ can be calculated in terms of the 3\ts{rd} order jet of the
  Finsler function $F$ at $(x,y)$. Therefore, the coefficients
  $R^i_{jk}(x,y)$ of the curvature tensor and the curvature vector fields
  \begin{equation}
    \label{eq:R_map}
    \mathcal R_{ij} := R(\delta_i,\delta_j),  \qquad i,j =1, \dots, n, 
  \end{equation}
  can be expressed algebraically by the 5\ts{th} order jet of $F$. More
  generally, the value of $k$\ts{th} order covariant derivatives and
  $k$\ts{th} successive Lie brackets of curvature vector fields can be
  expressed algebraically by the $(k+5)$\ts{th} order jet of $F$.
\end{remark}

\bigskip

\subsection{The holonomy group, the holonomy algebra and its subalgebras}
\label{sec:holonomy} \

\medskip

\noindent
The \emph{holonomy group} $\Hol_x(M,F)$ of a Finsler manifold $(M, F)$ at
a point $x\in M$ is the group generated by parallel translations along
piece-wise differentiable closed curves starting and ending at $x$. Since
the parallel translation \eqref{eq:parallel_3} is 1-homogeneous and preserves
the norm, one can consider it as a map
\begin{equation}
  \label{eq:hol_elem}
  \p_{c}:\I_x \to \I_x;
\end{equation}
therefore, the holonomy group can be seen as a subgroup of the
diffeomorphism group of the indicatrix:
\begin{displaymath}
  \Hol_x (M, F) \ \subset \ \diff{\I_x}, 
\end{displaymath}
and its tangent space at the identity, is called the holonomy algebra:
\begin{equation}
  \label{eq:hol_x}
  \hol_x (M, F) \ \subset \  \X{\I_x}.
\end{equation}
We are listing below the most important properties of the \emph{holonomy
  algebra} (see \cite{Hubicska_Muzsnay_tangent_2017}):
\begin{property}
  \label{prop:hol} \
  \begin{enumerate}[topsep=3pt, partopsep=3pt,leftmargin=25pt]
    \setlength{\itemsep}{1pt} \setlength{\parskip}{1pt}
    \setlength{\parsep}{1pt}
  \item \label{it:1} $\hol_x (M, F)$ is a Lie subalgebra of $\X{\I_x}$,
  \item \label{it:3} the exponential image of $\hol_x (M, F)$ is in the
    topological closure of $\Hol_x(M,F)$.
  \end{enumerate}
\end{property}
The \emph{infinitesimal holonomy algebra} $\ihol_x(M, F)$ is generated by
curvature vector fields and their horizontal Berwald covariant derivatives,
that is:
\begin{equation}
  \label{eq:inf_hol_alg}
  \ihol_x(M, F):=
  \left\langle
    \nabla_{Z_1} \dots \nabla_{Z_k} R(X^h, Y^h) \ \big| \ X,Y, Z_1, \dots,
    Z_k\in \X{M}  \right\rangle_{Lie}.
\end{equation}
The infinitesimal holonomy algebra $\mathfrak{hol}_x^{*}(M, F)$ is a Lie
subalgebra of $\hol_x (M, F)$.

\begin{remark}
  \label{rem:infinitesimal}
  The infinitesimal holonomy algebra is local in nature, that is for any
  open neighbourhood $U$ of $x\in M$ we get
  \begin{math}
    \ihol_x(M, F) = \ihol_x \bigl( U, F|_{\pi^{-1}(U)} \bigr).
  \end{math}
  For that reason, we will simplify the notation
  \begin{displaymath}
    \ihol_x(F) := \ihol_x(M, F)
  \end{displaymath}
  by omitting the neighborhood of the point where the infinitesimal holonomy
  algebra is determined. Indeed, the curvature vector fields, their horizontal
  Berwald covariant derivatives and their Lie brackets can be computed on an
  arbitrarily small neighbourhood of $x$, therefore their value at $x$ can be
  determined locally.
\end{remark}

We have the inclusions of Lie algebras:
\begin{equation}
  \label{eq:curv_hol_x}
  \mathfrak{hol}_x^{*}(F) \subset  \hol_x (M, F) \subset \X{\I_x}, 
\end{equation}
therefore, at the level of groups, we get
\begin{equation}
  \label{eq:group_curv_hol_x}
  \exp \bigl(\mathfrak{hol}_x^{*}(F) 
  \bigr) \subset   \exp \bigl(\hol_x (M, F)\bigr)
  \subset \Hol_x^{c} \bigl(M, F) \subset \diff{\I_x}
\end{equation}
where $\Hol_x^c (M)$ denotes the topological closure of the holonomy
group with respect to the $C^\infty$--topology of $\diff{\I_x}$. We call
a Lie algebra infinite dimensional if it contains infinitely many
$\R-$linearly independent elements. Clearly, using the tangent property
of the holonomy algebra, if $\hol_x(M, F)$ is infinite dimensional, then
the holonomy group cannot be a finite dimensional Lie group. This
observation motivates the following
\begin{definition}
  \label{def:inf_dim}
  The holonomy group of a Finsler manifold $(M, F)$ is called infinite
  dimensional if its holonomy algebra is infinite dimensional. 
\end{definition}
We refer to \cite{Hubicska_Muzsnay_tangent_2017} for a discussion of the
tangent Lie algebras of diffeomorphism groups and of the relation between the
holonomy group and the holonomy algebra.

\bigskip

\section{On the holonomy of the standard Funk metric}
\label{sec:3}

\noindent
A Funk metric can be described as follows. Let $\Omega$ be a bounded convex
domain in $\mathbb{R}^n$ and denote its boundary by $\partial \Omega$. We
can define a Finsler norm function $F_\Omega (x,y)$ in the interior of
$\Omega$ for any vector $y\in T_x\Omega$ by the formulas
\begin{displaymath}
  F_\Omega(x,y)>0, \qquad x+\frac{y}{F_\Omega(x,y)}=z,
\end{displaymath}
where $z\in \partial \Omega$. This norm function is called the Funk norm
function induced by $\Omega$.  The Funk norm induced by the origo centered
unit ball $B^n\subset \R^n$ will be called the \emph{standard Funk norm}
and will be denoted by $F_{\B^{^n}}$. We denote by $o=(0, \dots, 0)$ the
origin in $\R^n$.

\begin{remark}
  \label{rem:dif_group}
  The holonomy of $(\B^{^2}, F_{\B^{^2}})$ was investigated in
  \cite[Chapter 5]{Muzsnay_Nagy_max_2015}. It was proved that the
  infinitesimal holonomy algebra $\ihol_o (F_{\B^{^2}})$ contains the
  Fourier algebra $\mathsf{F}(\mathbb S^1)$ whose elements are vector
  fields $f\frac{d}{dt}$ such that $f(t)$ has finite Fourier series. One
  has
  \begin{equation}
    \label{eq:fourier_hol}
    \mathsf{F}(\mathbb S^1)  \ \subset \ \ihol_o (F_{\B^{^2}}) \
    \subset \ \X{\mathbb S^1}. 
  \end{equation}
  Since $\mathsf{F}(\mathbb S^1)$ is dense in $\X{\mathbb S^1}$, we get the
  same from \eqref{eq:fourier_hol} for $\ihol_o(F_{\B^2})$.  Using the
  exponential map, one can obtain from \eqref{eq:group_curv_hol_x} that the
  closure of the holonomy group of the Finsler surface $(\B^2, F_{\B^2})$
  is $\diffp{\mathbb S^1}$, the group of orientation preserving
  diffeomorphisms of the circle \cite[Theorem 5.2]{Muzsnay_Nagy_max_2015}.
\end{remark}

\begin{proposition}
  \label{prop:ind_vf}
  The infinitesimal holonomy algebra of the standard Funk metric
  $F_{\B^{^n}}$ at $o \in \R^n$ is infinite dimensional.
\end{proposition}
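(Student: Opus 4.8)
The plan is to reduce the statement to the two-dimensional case recorded in Remark~\ref{rem:dif_group}. Let $P\subseteq\R^n$ be the coordinate plane $\{x_3=\dots=x_n=0\}$, which we identify with $\R^2$, and set $N:=\B^{^n}\cap P\cong\B^{^2}$. Since $F_{\B^{^n}}$ is projectively flat, its geodesics are straight line segments, so a segment issuing from a point of $N$ in a direction tangent to $P$ stays in $P$; hence $N$ is a totally geodesic submanifold. Directly from the definition of the Funk norm, $\partial\B^{^n}\cap P=\mathbb{S}^1=\partial(\B^{^2})$, so the restriction of $F_{\B^{^n}}$ to $TN$ is exactly the standard Funk metric $F_{\B^{^2}}$. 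In particular $F_{\B^{^n}}(o,\cdot)$ is the Euclidean norm, so $\I_o(F_{\B^{^n}})=\mathbb{S}^{n-1}$ while $\I_o(F_{\B^{^2}})=\mathbb{S}^{n-1}\cap P=\mathbb{S}^1$, the latter embedded in the former as the fixed-point set of the natural $O(n-2)$-action on the coordinates $y_3,\dots,y_n$.

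The heart of the argument is to show that this totally geodesic section is compatible with the Berwald connection, so that the curvature vector fields and their covariant derivatives restrict correctly. Here one uses the explicit identity $G^i=\tfrac12 F_{\B^{^n}}\,y^i$, valid for the Funk metric: from it, and from the fact that for $x\in P$ the function $F_{\B^{^n}}(x,\cdot)$ is invariant under the $O(n-2)$-action on $y_3,\dots,y_n$, a direct computation shows that along $T':=\{x_3=\dots=x_n=0,\ y_3=\dots=y_n=0\}\subseteq T\B^{^n}$ the metric $g_{ij}$ is block diagonal, the coefficients $G^k_j$ with $j\le 2<k$ vanish, the horizontal lifts $\delta_1,\delta_2$ are tangent to $T'$, and the curvature vector fields $\mathcal{R}_{ij}$ with $i,j\in\{1,2\}$ have vanishing $\partial/\partial y_k$-components for $k\ge 3$. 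Since $\nabla_{\delta_k}=[\delta_k,\,\cdot\,]$ with $k\le 2$ preserves the class of vertical vector fields tangent to $T'$, every iterated covariant derivative $\nabla_{\delta_{k_1}}\!\cdots\nabla_{\delta_{k_m}}\mathcal{R}_{ij}$ with all indices in $\{1,2\}$ restricts to $\I_o(F_{\B^{^n}})=\mathbb{S}^{n-1}$ as a vector field tangent to $\mathbb{S}^1$. Moreover, block diagonality of $g$ forces formulas \eqref{eq:G_i}, \eqref{eq:curv_tensor} and the covariant derivative in the indices $1,2$ to involve only the $2\times2$ block of $g$ and its derivatives along $P$, which agree for $F_{\B^{^n}}$ and $F_{\B^{^2}}$; hence this restriction is exactly the corresponding curvature vector field, respectively iterated covariant derivative, of $F_{\B^{^2}}$. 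This step, together with the bookkeeping of indices in it, is the main obstacle.

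With that in hand the conclusion is linear algebra. Let $\mathfrak{a}\subseteq\ihol_o(F_{\B^{^n}})$ be the Lie subalgebra generated by the curvature vector fields $\mathcal{R}_{ij}$ with $i,j\in\{1,2\}$ together with all their iterated horizontal Berwald covariant derivatives in the directions $\delta_1,\delta_2$; since $\nabla_{\delta_k}=\nabla_{\partial_{x_k}}$, these generators lie in $\ihol_o(F_{\B^{^n}})$, so $\mathfrak{a}$ is indeed a subalgebra. By the previous paragraph every element of $\mathfrak{a}$ is tangent to $\mathbb{S}^1\subseteq\mathbb{S}^{n-1}$, hence restriction of vector fields defines a Lie algebra homomorphism $\Phi\colon\mathfrak{a}\to\X{\mathbb{S}^1}$ whose image is the Lie subalgebra generated by the curvature vector fields of $F_{\B^{^2}}$ together with their iterated covariant derivatives, i.e.\ all of $\ihol_o(F_{\B^{^2}})$. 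By Remark~\ref{rem:dif_group} the algebra $\ihol_o(F_{\B^{^2}})$ contains the Fourier algebra $\mathsf{F}(\mathbb{S}^1)$ and is therefore infinite dimensional; since $\Phi$ is a surjective linear map, $\mathfrak{a}$ — and hence $\ihol_o(F_{\B^{^n}})\supseteq\mathfrak{a}$ — is infinite dimensional. (Alternatively one could avoid the submanifold reduction by using that $F_{\B^{^n}}$ has constant flag curvature $-\tfrac14$ to write the curvature vector fields at $o$ explicitly and bound the dimension of the span of their covariant derivatives directly, but the reduction seems shorter and stays closest to the already-known two-dimensional computation.)
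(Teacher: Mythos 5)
Your proposal is correct and follows essentially the same route as the paper: restrict the standard Funk metric to a totally geodesic $2$-plane section through $o$, identify the restriction with the standard Funk metric on $\B^{^2}$, and pull back the infinite-dimensionality from the Fourier algebra $\mathsf{F}(\mathbb{S}^1)\subset\ihol_o(F_{\B^{^2}})$ of Remark~\ref{rem:dif_group}. The only difference is one of detail: you spell out (via $G^i=\tfrac12 F y^i$, the $O(n-2)$-symmetry and the block structure of $g_{ij}$) why the curvature vector fields and their covariant derivatives restrict compatibly, a step the paper asserts in one sentence by appeal to the totally geodesic property.
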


\begin{proof}
  For $n=2$ the proof follows directly from Remark \ref{rem:dif_group}
  since \eqref{eq:fourier_hol} shows that $\ihol_o (F_{\B^{^2}})$ contains
  the infinite dimensional Lie algebra $\mathsf{F}(\mathbb S^1)$.

  Let us consider the $n>2$ case. For each tangent $2$-plane
  $\mathcal K \subset T_o\B^n$ the restriction of $F_{\B^{^n}}$ to
  $\B^2:= \B^n \cap \mathcal K$ is the standard Funk metric on $\B^2$. One
  can suppose that $\mathcal K$ is the 2-plane generated by
  $\frac{\partial}{\partial x_1}$ and $\frac{\partial}{\partial
    x_2}$. Then, using the totally geodesic property, the curvature vector
  field and its successive covariant derivatives with respect to the
  directions $\frac{\partial}{\partial x_1}$ and
  $\frac{\partial}{\partial x_2}$ on $\B^2$ at $o$ can be inherited as
  restriction of the corresponding vector fields of $\B^n$.  Consequently,
  the elements of the Fourier algebra can be obtained as the restriction of
  elements of the infinitesimal holonomy algebra $\ihol(F_{\B^n})$ and we
  have
  \begin{displaymath}
    \mathsf{F}(\mathbb S^1)  \ \subset \ \ihol_o (F_{\B^{^2}})
    \ \simeq \ \ihol_o (F_{\B^{^n}}\big|_{\B^{^n}\cap \mathcal K}) \
    \subset \ \ihol_o (F_{\B^{^n}}).
  \end{displaymath}
  It follows that $\ihol_o(F_{\B^{^n}})$ contains infinitely many
  $\R$-independent vector fields which can be expressed by the curvature
  vector fields and their covariant derivatives.
\end{proof}

\begin{definition}
  \label{def:gen}
  A set  $\mathcal{V} \subset \X{M}$ of vector fields on a
  manifold $M$ is called
  \begin{enumerate}
  \item $k$-jet generating at $x\in M$ if the natural map
    \begin{math}
      j^k_{x}\colon \mathcal{V} \to J^k_{x} (\X{M})
    \end{math}
    is surjective,
  \item jet generating on $M$ if at any $x\in M$ and for any $k\geq 0$ it is
    $k$-jet generating.
  \end{enumerate}
\end{definition}
  
We have the following
\begin{proposition}
  \label{prop:Funk_generating}
  The infinitesimal holonomy algebra $\ihol_o (F_{\B^{^n}})$ of the standard
  Funk metric at the point $o\in \B^{^n}$ has the jet generating property on
  the indicatrix $\mathcal{I}_o$.
\end{proposition}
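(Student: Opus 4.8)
The plan is to verify the jet generating property directly from Definition \ref{def:gen}: for every $p\in\I_o$ and every $k\geq 0$ we must show that $j^k_p\colon \ihol_o(F_{\B^{^n}})\to J^k_p(\X{\I_o})$ is surjective. Since $F_{\B^{^n}}(o,\cdot)$ is the Euclidean norm, $\I_o$ is the round sphere $\mathbb S^{n-1}$, and every $A\in O(n)$ is an isometry of $(\B^{^n},F_{\B^{^n}})$ fixing $o$; hence $A$ acts on $\I_o$ and carries $\ihol_o(F_{\B^{^n}})$ to itself. As $O(n)$ is transitive on $\mathbb S^{n-1}$, it suffices to prove surjectivity of $j^k_p$, for all $k$, at one point, say $p=(1,0,\dots,0)$.

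For $n=2$ this is immediate from Remark \ref{rem:dif_group}, where $\mathsf F(\mathbb S^1)\subset\ihol_o(F_{\B^{^2}})$: the trigonometric polynomials realise every $k$-jet of a vector field at each $\theta_0\in\mathbb S^1$, because $1,\cos\theta,\sin\theta,\dots,\cos N\theta,\sin N\theta$ span a solution space of a constant-coefficient linear ODE whose characteristic roots sum to zero, so their Wronskian is a nonzero constant and finitely many of them already surject onto the $(k+1)$-dimensional jet space at $\theta_0$.

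For $n>2$ I would use two families of elements of $\ihol_o(F_{\B^{^n}})$. First, the totally geodesic mechanism of the proof of Proposition \ref{prop:ind_vf}: for any $2$-plane $\mathcal K\ni o$ the disk $\B^{^n}\cap\mathcal K$ is totally geodesic with induced metric the standard Funk metric on $\B^{^2}$, so the curvature vector field of $F_{\B^{^2}}$, its Berwald covariant derivatives along $\tfrac{\partial}{\partial x_1},\tfrac{\partial}{\partial x_2}$, and their Lie brackets arise as restrictions to the great circle $C:=\I_o\cap\mathcal K$ of elements of $\ihol_o(F_{\B^{^n}})$ that are tangent to $C$ along $C$ (curvature vector fields are tangent to $\mathbb S^{n-1}$, and along $C$ they lie in $\mathcal K$, hence in $TC=\mathcal K\cap T\mathbb S^{n-1}$). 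As the vector fields tangent to $C$ along $C$ form a Lie subalgebra of $\X{\mathbb S^{n-1}}$ on which restriction to $C$ is a Lie homomorphism, and this restriction already contains $\mathsf F(C)$ by Remark \ref{rem:dif_group}, we obtain: for each $g\in\mathsf F(C)$ there is $V\in\ihol_o(F_{\B^{^n}})$, tangent to $C$ along $C$, with $V|_C=g$. Second, a short computation from \eqref{eq:G_i} and \eqref{eq:curv_tensor}, using that for the standard Funk metric $G^i=\tfrac12 F_{\B^{^n}}y_i$ and $\partial_{x_i}F_{\B^{^n}}=F_{\B^{^n}}\partial_{y_i}F_{\B^{^n}}$, gives $\mathcal R_{ij}(x,y)=\tfrac14 F_{\B^{^n}}\bigl(\partial_{y_j}F_{\B^{^n}}\,\partial_{y_i}-\partial_{y_i}F_{\B^{^n}}\,\partial_{y_j}\bigr)$, hence $\mathcal R_{ij}(o,y)=\tfrac14\bigl(y_j\partial_{y_i}-y_i\partial_{y_j}\bigr)$; thus at $o$ the curvature vector fields are the infinitesimal rotations and $\mathfrak{so}(n)\subset\ihol_o(F_{\B^{^n}})$.

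It then remains to show these two families generate all of $J^k_p$ under $j^k_p$. In a chart $(y_2,\dots,y_n)$ around $p=(1,0,\dots,0)$, the field $V$ attached to the great circle $C_i:=\I_o\cap\langle e_1,e_i\rangle$ is, along $C_i$, of the form $h(y_i)\,\partial_{y_i}$ with $h$ running over all degree-$\leq k$ polynomials in one variable, so it contributes — modulo terms in the ideal generated by the other coordinates — every ``radial in one coordinate'' piece of a $k$-jet. The transverse derivatives are then supplied by the $\mathfrak{so}(n)$-part: a rotation $\rho_{ij}$ with $i,j\geq 2$ vanishes at $p$, so $\mathrm{ad}_{\rho_{ij}}$ acts on $J^k_p$ as the prolongation of the rotation of $T_p\mathbb S^{n-1}$ in the $(y_i,y_j)$-plane, and iterating these operators and the brackets of the $V$'s fills in the mixed derivatives; decomposing $J^k_p$ into isotypic pieces for the isotropy group $O(n-1)$ and inducting on the jet order, stripping off the top homogeneous layer, one checks that the seeds reach every piece. (Alternatively, instead of the great-circle fields one may take the iterated Berwald covariant derivatives of $\mathcal R$, which by the displayed formula are explicit polynomial vector fields on $\mathbb S^{n-1}$ of increasing degree, and show these generate all polynomial vector fields.) I expect this final bookkeeping to be the main obstacle: since the Lie bracket of a degree-$a$ and a degree-$b$ jet has degree $a+b-1$, one has to ensure that the top-degree part of each jet is actually attained and not merely approximated — exactly the effect that in dimension $2$ makes a few consecutive Fourier modes, rather than just one, necessary to generate the Fourier algebra.
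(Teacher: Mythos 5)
Your opening moves coincide with the paper's: the $n=2$ case via density of the Fourier algebra in $\X{\mathbb S^1}$ (plus finite dimensionality of the jet space), and for $n>2$ the totally geodesic $2$-planes through $o$ carrying copies of the two-dimensional infinitesimal holonomy algebra. The divergence --- and the gap --- is in the final step, where the full $k$-jet at $p$ has to be assembled. You fix only the $n-1$ coordinate great circles $C_i$ together with the $\mathfrak{so}(n)$ part of the curvature, and propose to reach the mixed partial derivatives by iterated Lie brackets ($\mathrm{ad}_{\rho_{ij}}$ acting on jets, isotypic decomposition under $O(n-1)$, induction on the top homogeneous layer). You yourself flag that this bookkeeping is not carried out, and the difficulty you identify is genuine: since the bracket of a degree-$a$ and a degree-$b$ jet has degree $a+b-1$, brackets of your seeds do not obviously attain the top-order part of an arbitrary $3$-jet, and nothing in the proposal establishes that they do. There is a second, compounding problem: the fields $V$ produced by the totally geodesic construction are only controlled \emph{along} $C_i$ (you know $V|_{C_i}$, not the derivatives of $V$ transverse to $C_i$), so every bracket of a $V$ with a rotation or with another $V$ carries uncontrolled contributions from these unknown transverse jets. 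As written, surjectivity of $j^k_p$ is not proved.

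The paper closes exactly this step with a purely linear-algebraic device that requires no further Lie brackets: for \emph{every} $y\in\I_o$ and \emph{every} direction $v\in T_y(\I_o)$ --- not just the coordinate directions --- the plane $\mathcal K_{y,v}$ yields elements $\xi$ of $\ihol_o(F_{\B^{^n}})$ with prescribed iterated $v$-directional derivatives as in \eqref{eq:derivatives}, and the polarization identity \eqref{eq:mixed_der} then expresses every mixed derivative $\mathcal D_{v_1}\bigl(\mathcal D_{v_2}\cdots(\mathcal D_{v_k}\xi)\bigr)$ as a linear combination of pure directional derivatives $\mathcal D^{(k)}_{v_{j_1}+\dots+v_{j_s}}\xi$. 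Replacing your bracket/representation-theoretic step by this polarization argument over all directions would repair the proof; your $O(n)$-equivariance reduction and the identification of $\mathcal R_{ij}(o,\cdot)$ with infinitesimal rotations are reasonable but are not needed on that route.
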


\begin{proof}
  According to Definition \ref{def:gen}, we have to show that for any
  $y \in \I_o$ and $k\in \N$ the jet-projection
  \begin{math}
    \ihol_o(F_{\B^{^n}}) \rightarrow J_y^k(\X{\I_o})
  \end{math}
  is onto. 

  In the case $n=2$, we get from Remark \ref{rem:dif_group} that
  $\ihol_o (F_{\B^{^2}})$ is dense in $\X{\I_o}$. It follows that the
  restriction of the $k$\ts{th} order jet projection on the infinitesimal holonomy
  algebra
  \begin{equation}
    \label{eq:2_dim_jet_onto}
    j^k_y: \ihol_o(F_{\B^2}) \longrightarrow J^k_y(\X{\I_o}),
  \end{equation}
  is onto. In other words, any given $k$\ts{th} order jet in $J^k_y(\X{\I_o})$
  can be realized as the $k$-jet of an element of the infinitesimal holonomy
  algebra. Clearly we have the jet generating property.
    
  Let us consider the $n>2$ case. If $y\in \I_o$ and $v\in T_y(\I_o)$, let
  $\mathcal K_{y,v}$ be the 2-plane determined by these vectors. Using the
  argument of the proof of Proposition \ref{prop:ind_vf} and
  \eqref{eq:2_dim_jet_onto} we get that
  \begin{equation}
    j^k_y: \ihol_o(F_{\B^n}\big|_{\I_o\cap \mathcal K_{y,v}})
    \longrightarrow   J^k_y(\X{\I_o\cap \mathcal K_{y,v}})
  \end{equation}
  is onto. It follows that for $y \in \I_o$ and $v\in T_y(\I_o)$, any
  $k$\ts{th} order $v$-directional derivative can be realised by elements of
  the holonomy algebra. Using the local coordinate system
  $y_1, \dots, y_{n-1}$ on the $n-1$-dimensional indicatrix $\I_o$ we get that
  for any given $(z, z_1, \dots, z_k) \in (\R^{(n-1)})^{(k+1)}$ there exist
  $\xi\in \ihol_o(F_{\B^n})$ such that
  \begin{equation}
    \label{eq:derivatives}
    \xi\big|_y=z, \quad 
    (\mathcal D_v \xi)\big|_y=z_1, \quad \dots \quad 
    (\mathcal D^{(k)}_v \xi)\big|_y=z_k,
  \end{equation}
  where we consider a locally constant extension of $v$ when the higher order
  derivatives are computed.  For the completion of the proof however we must
  be able to generate all $k$-th jet at $y$, that is the terms corresponding
  to mixed partial derivatives as well. This is possible, by using higher
  order derivatives corresponding to several directions.  Indeed, one can use
  the polarization technique to show that the $k$\ts{th} order mixed partial
  derivatives are determined by the $k$\ts{th} order directional derivatives,
  a similar way as the quadratic form determines the corresponding symmetric
  bilinear form, or more generally, as the homogeneous from of degree $k$
  determines the corresponding symmetric multilinear $k$-from.  Indeed,
  considering any $v_1, \dots v_k \in T_y (\I_o)$ and their constant extension
  in a neighbourhood of $y$, we get
  \begin{equation}
    \label{eq:mixed_der}
    \mathcal  D_{v_1}\bigl(\mathcal  D_{v_2} \cdots
    (\mathcal D_{v_k} \xi)\bigr) = \frac{1}{k!} \sum^k_{s=1} \sum_{\ 1 \leq
      j_1 < \dots < j_s \leq k} (-1)^{k-s} \mathcal D^{(k)}_{v_{j_1}+ \dots +
      v_{j_s}} \xi.
  \end{equation}
  It follows that any mixed derivative can be realized for appropriately
  chosen (higher order) directional derivatives, therefore the $k$-jet
  generating property is satisfied. The argument works for any $y\in \I_o$ and
  $k\in \N$, therefore, the jet generating property holds.
  
\end{proof}
 
\begin{remark}[The jet generating property of curvature vector fields and their
  derivatives]
  \label{rem:k_jet_gen}
  One can easily show that in the 2-dimensional case, at any point
  $y\in \I_o$, the set of the curvature vector field and its derivatives up to
  order $k$ contains $k+1$ linearly independent $k$-jet, therefore this set
  has the $k$-jet generating property.  In the higher dimensional cases, from
  the argument of Proposition \ref{prop:Funk_generating} using 2-dimensional
  planes, one can obtain that for any point $y\in \I_o$ and any direction
  $v\in T_y(\I_o)$, the curvature vector fields and their derivatives up to
  order $k$ can be used to express the directional derivatives
  \eqref{eq:derivatives}. From formula \eqref{eq:mixed_der} one obtains that
  any $k$\ts{th} order derivative can be obtained by the derivatives of the
  curvature vector fields up to order $k$, that is the set
  \begin{equation}
    \label{eq:k_jet_curv}
    \left\{\mathcal R_{i j}, \ \nabla_{p_1} \! \mathcal R_{i j},  \ \dots,   \ 
      \nabla_{p_1 \dots p_k} \! \mathcal R_{ij} \ |  \ 1 \leq i,j,p_1
      \dots p_k  \leq  n \right\} \subset \X{\I_o}
  \end{equation}
  has the $k$-jet generating property.
\end{remark}

\bigskip

\section{The Funk-perturbed Finsler metrics}
\label{sec:4}

In this section we investigate the holonomy group of a Finsler metric
perturbed with the standard Funk metric.  We present some technical properties
of the Funk deformation which are essential in the proof of Theorem
\ref{main_thm_2}.

Let $(M, F)$ be a Finsler manifold and $x_0\in M$ be a fixed point. We can
chose an $x_0$-centered coordinate system $(U, x)$ such that
$x(U)\subset \B^n$. The associated coordinate system on $TM$ will be denoted
by $( \pi^{-1}(U), \chi=(x, y))$.  We also consider a bump function
$\psi\colon M \to \R$, such that $supp(\psi) \subset U$ and
$\psi|_{\tilde{U}} =1$ for some open neighbourhood $\tilde{U} \subset U$ of
$x_0$. We denote by $\bar{\psi}:= \psi \circ \pi$ the pull-back of $\psi$ by
the projection $\pi$.

Using the standard Funk norm function $F_{\B^n}$, we introduce the Finsler
norm $\bar{F}: TM \to \R$ by the formula
\begin{equation}
  \label{eq:bar_F}
  \bar{F}^2 = \psi \cdot (F_{\B^n} \circ \chi) ^2 + (1-\psi) \cdot F^2.
\end{equation}
We remark that $\bar{F}$ is the pull-back of the standard Funk norm function
on $\pi^{-1}(\tilde{U})$.

\vspace{6pt}

Using \eqref{eq:bar_F} we define a smooth perturbation of the Finsler
function $F$ as a 1-parameter family of functions $F_t$, where
\begin{equation}
  \label{def:pert}
  F_t^2 = (1-t)F^2 + t \bar{F}^2, \quad t\in [0,1].
\end{equation}
Then $F_t$ is a 1-parameter family of Finsler metrics. Indeed, $F$ and
$\bar{F}$ are positively 1-homogeneous continuous function, smooth on
$\T M$, therefore $F_t$ verifies these properties.  Moreover, taking the
squares in \eqref{def:pert} ensures that the bilinear form 
\begin{displaymath}
  g_{ij}^t = (1-t) \, g_{ij} + t \, \bar{g}_{ij} , \quad t\in [0,1]
\end{displaymath}
of $F_t$ is positive definite as well.

\medskip

\begin{proposition}
  \label{property:alg_exp}
  Any element of the infinitesimal holonomy algebra $\ihol_{x_0} (F_t)$
  can be expressed as an algebraic fraction of polynomials in $t$ whose
  coefficients are determined by $j^{k}_{x_0}F$ and $j^{k}_{x_0}\bar{F}$
  for some $k\in \N$.
\end{proposition}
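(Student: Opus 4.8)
The plan is to track how the infinitesimal holonomy data depends on the parameter $t$, using the key observation from Remark \ref{remark:jet}: curvature vector fields, their Berwald covariant derivatives, and iterated Lie brackets of these are all computed \emph{algebraically} from a finite-order jet of the Finsler function. So the first step is to examine the $t$-dependence of that jet. From \eqref{def:pert} we have $F_t^2 = (1-t)F^2 + t\bar F^2$, hence $j^k_{x_0}(F_t^2)$ is an \emph{affine} function of $t$ whose "value" and "slope" are determined by $j^k_{x_0}F$ and $j^k_{x_0}\bar F$. Since $g^t_{ij} = (1-t)g_{ij} + t\bar g_{ij}$ is positive definite for all $t\in[0,1]$, its matrix is invertible, and by Cramer's rule the entries of $(g^t)^{ij}$ are rational functions of $t$ (ratio of a polynomial in $t$ of degree $\le n$ over $\det(g^t)$, again a polynomial in $t$), with coefficients determined by the jets of $F$ and $\bar F$ at $x_0$.

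The second step is to propagate this through the formulas \eqref{eq:G_i}, \eqref{eq:lift2}, \eqref{eq:curv_tensor}: the geodesic coefficients $G^i$, then $G^i_j$, $G^i_{jk}$, then the curvature coefficients $R^i_{jk}$ and the curvature vector fields $\mathcal R_{ij} = R(\delta_i,\delta_j)$, are each obtained from the previous data by finitely many operations of \emph{addition, multiplication, partial differentiation in $x$ and $y$}, and \emph{one division} by $\det(g^t)$ (coming from the $g^{il}$ in \eqref{eq:G_i}). None of these operations leaves the class of functions that are rational in $t$ with jet-of-$F$, jet-of-$\bar F$ coefficients: addition and multiplication of such fractions stays in the class (common denominators are products of powers of $\det(g^t)$ and other polynomials in $t$), and $\partial/\partial x$, $\partial/\partial y$ act only on the $(x,y)$-dependence of the coefficients, not on $t$, so they preserve the form "polynomial in $t$ over polynomial in $t$." Evaluating at $y\in\mathcal I_{x_0}^{\,t}$ — or more simply working with the coefficient functions before restriction — and taking the components of $\mathcal R_{ij}$ in the $\partial/\partial y_i$ basis, we conclude that each component of each curvature vector field at $x_0$ is such an algebraic fraction in $t$.

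The third step handles the Lie-algebra operations that generate $\ihol_{x_0}(F_t)$ from the curvature vector fields, namely horizontal Berwald covariant derivatives $\nabla_{Z}$ and Lie brackets $[\cdot,\cdot]$ of vector fields on $\mathcal I_{x_0}$. By Remark \ref{remark:jet} these are again algebraic in the relevant jet of $F_t$ — concretely, $\nabla_Z$ involves the coefficients $G^i_j$, $\partial G^i_j/\partial y_k$ and differentiation, and the Lie bracket is bilinear with one differentiation — so each application stays within the class of finite fractions of polynomials in $t$ with coefficients determined by $j^{k}_{x_0}F$ and $j^{k}_{x_0}\bar F$ for a (larger but finite) $k$ depending on the number of brackets and derivatives. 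Since every element of $\ihol_{x_0}(F_t)$, by \eqref{eq:inf_hol_alg}, is a finite $\R$-linear combination of finitely many such iterated expressions, it too is a finite fraction of polynomials in $t$ of the asserted form — which is the statement.

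The main obstacle I expect is purely bookkeeping rather than conceptual: one must be careful that the "denominators" stay in the right class under all the operations (in particular that dividing by $\det(g^t)$, which is a polynomial in $t$ \emph{not vanishing on} $[0,1]$ by positive definiteness, is legitimate and does not introduce spurious $t$-singularities), and that the order $k$ of the jet needed, while growing with the complexity of the element, remains \emph{finite} for each individual element of $\ihol_{x_0}(F_t)$. Once the closure of the relevant function class under $+$, $\times$, $\partial_x$, $\partial_y$ and division-by-$\det(g^t)$ is stated cleanly, the proof is a short induction on the construction in \eqref{eq:inf_hol_alg}.
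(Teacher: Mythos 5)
Your proposal is correct and follows essentially the same route as the paper's proof: both track the affine $t$-dependence of $j^k_{x_0}(F_t^2)$ and propagate it through the algebraic formulas for $G^i$, the curvature, covariant derivatives and Lie brackets via Remark \ref{remark:jet}, concluding that each element is a rational function of $t$ with coefficients determined by $j^{k+5}_{x_0}F$ and $j^{k+5}_{x_0}\bar F$. Your version merely spells out details (Cramer's rule for $(g^t)^{ij}$, non-vanishing of $\det(g^t)$ on $[0,1]$, closure of the function class under the relevant operations) that the paper leaves implicit.
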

\begin{proof}
  The geodesic coefficients $G^i_t$, $i=1, \dots, n$ of $F_t$ can be
  calculated in terms of $j^3_{x_0}(F_t)$, therefore in terms of $t$,
  $j^3_{x_0} (F)$, and $j^3_{x_0}(\bar{F})$. More precisely, their expressions
  are algebraic fractions of polynomials in $t$ whose coefficients are
  determined by the third order jets of $F$ and $\bar{F}$.  Similarly, the
  curvature vector fields of $F_t$ can be expressed as algebraic fractions of
  polynomials in $t$ whose coefficients are determined by $j^5_{x_0}(F)$ and
  $j^5_{x_0}(\bar{F})$.  More generally, using Remark \ref{remark:jet}, the
  value of $k$\ts{th} order covariant derivatives and $k$\ts{th} successive
  Lie brackets of curvature vector fields of $F_t$ can be expressed as
  algebraic fractions of polynomials in $t$ whose coefficients are determined
  by $j^{k+5}_{x_0}(F)$ and $j^{k+5}_{x_0}(\bar{F})$.

\end{proof}

\medskip
 
\begin{proposition}
  \label{prop:generating_at_t}
  For any $y_0 \in \I_o$ the set of parameters $t\in [0,1]$, where the 3-jet
  generating property of the infinitesimal holonomy algebra
  \begin{math}
    \ihol_{x_0} (F_t) \subset \X{\I_{x_0}^{\, t}}
  \end{math}
  of the Funk perturbation \eqref{def:pert} is not satisfied, is finite.
\end{proposition}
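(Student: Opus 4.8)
The plan is to reduce the $3$-jet generating property of $\ihol_{x_0}(F_t)$ to a maximal-rank condition for a matrix that is a rational function of $t$ with no poles near $[0,1]$, and then to invoke the fact that a non-zero rational (or real-analytic) function has only finitely many zeros on a compact interval. First I would pass to a finite family: by the very definition of the infinitesimal holonomy algebra, the finite set
\begin{equation*}
  \mathcal{C}_t:=\bigl\{\,\mathcal{R}^{\,t}_{ij},\ \nabla^{\,t}_{k_1}\mathcal{R}^{\,t}_{ij},\ \nabla^{\,t}_{k_1k_2}\mathcal{R}^{\,t}_{ij},\ \nabla^{\,t}_{k_1k_2k_3}\mathcal{R}^{\,t}_{ij}\ \big|\ 1\le i,j,k_1,k_2,k_3\le n\,\bigr\}
\end{equation*}
of curvature vector fields of $F_t$ and their horizontal Berwald covariant derivatives along the coordinate directions up to order $3$ — i.e.\ the set appearing in Remark~\ref{rem:k_jet_gen} — lies in $\ihol_{x_0}(F_t)$; hence it suffices to prove that the $3$-jets at $p_t:=\rho_t(y_0)$ of the elements of $\mathcal{C}_t$ span $J^3_{p_t}\bigl(\X{\I_{x_0}^{\,t}}\bigr)$ for all but finitely many $t\in[0,1]$, where $\rho_t\colon\I_o\to\I_{x_0}^{\,t}$, $\rho_t(y)=y/F_t(x_0,y)$, is the radial projection. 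Note $\rho_1=\mathrm{id}_{\I_o}$, since at $x_0$ the metric $F_1=\bar F$ is the Euclidean norm, so $\I_{x_0}^{\,1}=\I_o=\mathbb S^{n-1}$.

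Next, fix a coordinate chart of $\I_o$ around $y_0$ and let $A(t)$ be the matrix whose rows are the coordinate representations, in this chart, of the $3$-jets at $y_0$ of the pulled-back vector fields $\rho_t^{*}\bigl(\xi|_{\I_{x_0}^{\,t}}\bigr)$, $\xi\in\mathcal{C}_t$; the spanning property at $p_t$ is then equivalent, for $t\in[0,1]$, to $\operatorname{rank}A(t)=N$, where $N:=\dim J^3_{y_0}\bigl(\X{\I_o}\bigr)$ depends only on $n$. The heart of the argument is that $A(t)$ is a rational matrix function of $t$ with no poles in a neighbourhood $W$ of $[0,1]$. Indeed, by Proposition~\ref{property:alg_exp} and Remark~\ref{remark:jet} the components of the vector fields of $\mathcal{C}_t$, regarded as vertical vector fields on $\T_{x_0}M$, are rational in $t$ with denominators that are powers of $\det g^{\,t}_{ij}(x_0,\cdot)$ — a polynomial in $t$ which stays positive for $t\in W$ by the $0$-homogeneity of $g^{\,t}$ in the fibre variable together with continuity of eigenvalues. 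Moreover every element of $\mathcal{C}_t$ is positively $1$-homogeneous in the fibre variable, and $\rho_t$ is the radial rescaling by $F_t(x_0,\cdot)^{-1}$; this homogeneity makes the rescaling cancel when one forms $\rho_t^{*}\bigl(\xi|_{\I_{x_0}^{\,t}}\bigr)$, so that taking the finitely many derivatives entering a $3$-jet and evaluating at $y_0$ leaves a rational function of $t$ whose denominator is a product of powers of $\det g^{\,t}_{ij}(x_0,y_0)$, of $F_t(x_0,y_0)^2=(1-t)F(x_0,y_0)^2+t$, and of a fixed Jacobian-type determinant — each a polynomial in $t$ without a zero on $W$. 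Hence $A(t)$ is rational in $t$ and regular on $W$.

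Finally, I would exhibit one value of $t$ where the rank is maximal. At $t=1$ the metric $F_1=\bar F$ coincides near $x_0$, in the chosen chart, with the standard Funk metric $F_{\B^n}$ near $o$; by the locality of $\ihol$ (Remark~\ref{rem:infinitesimal}) the algebra $\ihol_{x_0}(F_1)$ is the infinitesimal holonomy algebra of $F_{\B^n}$ at $o$, and by Remark~\ref{rem:k_jet_gen} the set $\mathcal{C}_1$ is $3$-jet generating at every point of $\I_o$, in particular at $y_0$; so $\operatorname{rank}A(1)=N$. Consequently the function $h(t):=\sum_I\bigl(\det A_I(t)\bigr)^2$, where $A_I$ ranges over the $N\times N$ submatrices of $A(t)$, is rational on $W$, vanishes exactly where $\operatorname{rank}A(t)<N$, and satisfies $h(1)\neq0$; hence $h$ is not identically zero and has only finitely many zeros on $[0,1]$. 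Since $\mathcal{C}_t\subset\ihol_{x_0}(F_t)$, the set of $t\in[0,1]$ for which $\ihol_{x_0}(F_t)$ fails to be $3$-jet generating at $p_t$ is contained in $\{t\in[0,1]:h(t)=0\}$, and is therefore finite. The step I expect to be the main obstacle is precisely the rationality (equivalently, the real-analyticity) of $A(t)$ in $t$: the indicatrix $\I_{x_0}^{\,t}$ genuinely moves with $t$ while the Finsler data are only $C^\infty$ in the fibre variable, and it is the $1$-homogeneity of the curvature vector fields and of their Berwald covariant derivatives that prevents the moving base point from introducing anything worse than the explicit polynomial factors displayed above.
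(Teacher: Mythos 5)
Your proposal is correct and follows essentially the same route as the paper: both arguments reduce the $3$-jet generating property to the non-vanishing of a function of $t$ that is an algebraic fraction of polynomials by Proposition \ref{property:alg_exp}, is non-zero at $t=1$ by the jet-generating property of the standard Funk metric, and therefore vanishes at only finitely many $t\in[0,1]$. The remaining differences are cosmetic — the paper fixes exactly $l=\dim J^3_{y_0}(\X{\I_{x_0}})$ holonomy vector fields and takes a single determinant, normalizing the metric so that $y_0$ lies on every indicatrix, whereas you keep the full set of curvature fields and derivatives up to order $3$, sum the squares of the maximal minors, and handle the moving indicatrix by radial pull-back.
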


\begin{proof}
  Let us suppose that $y_0\in \mathcal{I}_{x_0}^{\, t} \subset T_{x_0}M$ for every
  $t \in [0,1]$. If not, then we can consider
  \begin{math}
    \widetilde{F}_t(x,y) := 
    F_t(x,y)/F_t(x_0,y_0)
  \end{math} 
  which is just a rescaling with a constant for any given $t$, therefore it
  doesn't affect the jet generating property on the indicatrix.
  
  From Proposition \ref{prop:Funk_generating} we know that the Funk metric
  has the jet generating property, therefore for $t=1$, there are vector
  fields
  \begin{equation}
    \label{eq:W_funk}
    \left\{ W_1,\ldots, W_l \right\} \in \ihol_{x_0} (F_{t=1}),
  \end{equation}
  $l:=\dim (J^3_{y_0}(\X{\I_{x_0}^{\, t=1}}))$ in the infinitesimal holonomy algebra, such
  that any 3\ts{rd} order jet at $y_0$ can be realized with their
  combination.  Those vector fields are linear combination of curvature
  vector fields, their derivatives and their Lie brackets. Let us consider
  them for any $F_t$, $t\in [0,1]$. We get
  a set in the infinitesimal holonomy algebra of $F_t$ at $x_0$: 
  \begin{equation}
    \label{eq:W}
    \left\{ W_1(t),\ldots, W_l(t) \right\} \in \ihol_{x_0} (F_t).
  \end{equation}
  Using Proposition \ref{property:alg_exp}, these vector fields are
  algebraic fractions of polynomials in $t$ whose coefficients are
  determined by $j^k_{x_0}(F)$ and $j^k_{x_0}(\bar{F})$ for some $k\in \N$.
  It follows that the determinant of the $l\times l$ matrix composed by the
  3\ts{rd} order jet coordinates of \eqref{eq:W} at $y_0$:
  \begin{equation}
    \label{eq:P_t}
    \mathcal P_t:=
    \det \left(
      \begin{matrix}
        j^3_{y_0} (W_1(t))
        \\
        \vdots
        \\
        j^3_{y_0} (W_l(t))
      \end{matrix}
    \right),
  \end{equation}
  is an algebraic fraction of polynomials in $t$ whose coefficients are
  determined by $j^3_{x_0}(F)$ and $j^3_{x_0}(\bar{F})$ for some $k\in \N$
  with $\mathcal P_{t=1} \not \equiv 0$.  Since every non-trivial
  polynomial has finitely many roots, $\mathcal P_t$ can only be zero at
  finitely many values $t\in[0,1]$. By continuity, there is a neighbourhood
  of $y_0$ where this property is satisfied.
\end{proof}

\bigskip

\section{Density of Finsler metrics with infinite dimensional
  holonomy group}
\label{sec:5}

\noindent
In this section we prove Theorems  \ref{thm:no_transitive_finite} and  
\ref{main_thm_2}.  

\subsection{Proof of Theorem \ref{thm:no_transitive_finite}.}
 
We prove the theorem by contradiction: let us suppose that
$\mathfrak{g} \subset \X{U}$ is a \emph{finite} dimensional Lie algebra on an
$n$-dimensional manifold $U$ and it is generating the third order jets at
$x_0\in U$. As before, the last property means that the 3-jet projection
\begin{math}
  \mathfrak{g} \to J^3_{x_0} (\X{U})
\end{math}
is onto.

We remark that a manifold with a finite dimensional Lie algebra of vector
fields with locally transitive action is real analytic (in the sense that
there exist a real-analytic atlas such that the vector fields of the Lie
algebra are real-analytic). Indeed, a finite dimensional Lie algebra
generates a Lie group, and one can provide an analytic atlas on this Lie
group so that the group multiplication is analytic. Moreover, local Lie
subgroups are real analytic submanifolds, because they are images of the
exponential map. For more about Lie groups and related topics, we refer to
\cite{Kolar_Michor_Slovak_1993, Pontryagin_1954}.  It follows that locally,
a manifold with transitive action of a Lie group is the factor group of the
Lie group by the stabilizer of one element, which is a local Lie subgroup,
then it is also analytic.

We say that the order of singularity of an element $v\in \mathfrak{g}$ at
$x_0$ is $k \in \N$, noted as $\mathcal O_{x_0}(v)=k$, if the value and all
partial derivatives up to order $k$ at $x_0$ are zero, and $v$ has a
non-vanishing $(k+1)$st order derivative.  Each nonzero element has finite
order by analyticity.  Let us consider the set
$\mathfrak{g}_1 \subset \mathfrak{g}$ with order of singularity at least
one, that is
  \begin{displaymath}
    \mathfrak{g}_1:=\Big\lbrace v\in \mathfrak{g}\quad
    \Big|\quad v(x_0)=0, \ \frac{\partial v}{\partial x_{i}}(x_0)=0, 
    \quad  i=1, \ldots,  n  \Big\rbrace.
  \end{displaymath}
  It is easy to see that $\mathfrak{g}_1$ is a Lie subalgebra of
  $\mathfrak{g}$. Indeed, the $j$th component of the commutator of two vector
  fields $v,u\in \mathfrak{g}_1$ is given by
  \begin{displaymath}
    [ v, u]_j =\sum_i \left(\tfrac{ \partial v_j }{\partial x_i} u_i  -
      \tfrac{ \partial u_j }{\partial x_i} v_i\right)
\end{displaymath}
and has singularity of order at least two at $x_0$. Actually, for any two
vector fields $V, U$ from $\mathfrak{g}_1$ such that $V$ has order of
singularity $k$ and $U$ has order of singularity $m$ their commutator has
order of singularity $k+m$.
   
Since $\mathfrak{g}$ is finite dimensional, so is $\mathfrak{g}_1$. It follows
that the order of singularity is bounded on $\mathfrak{g}_1$. Indeed, if not,
then there would be a sequence of vectors in $\mathfrak{g}_1$ with strictly
monotone increasing order of singularity at $x_0$ which would produce an
infinite number of linearly independent elements which is impossible.

Let $v_1\in \mathfrak{g}_1$ be a non-zero element with maximal order
$\mathcal O_{x_0}(v_1)=k$ of singularity at $x_0$. Using the 3-jet generating
property, we have $k \geq 2$. Then, for any $v\in \mathfrak{g}_1$ we have
$[v,v_1]\in \mathfrak{g}_1$ and
\begin{math}
  \mathcal O_{x_0}([v,v_1]) > k.
\end{math}
Since $k$ is maximal, it follows that $[v,v_1]=0$, and it shows that $v_1$
commutes with every elements of $\mathfrak{g}_1$.

On the other hand, it is possible to choose a point $\hat{x}_0 \in U$ in a
neighbourhood of $x_0$ such that $v_1(\hat{x}_0) \neq 0$ and $\mathfrak{g}_1$
has the 1-jet generating property at $\hat{x}_0$.  It follows that one can
choose an element $v_2 \in \mathfrak{g}_1$ such that $v_2(\hat{x}_0)=0$ but
$\mathcal D_{v_1} v_2 \neq 0$. Therefore the commutator $[v_1,v_2]$ at
$\hat{x}_0$ is non zero. This is a contradiction since $v_1$ is an element
which commutes with every element of $\mathfrak{g}_1$. Theorem
\ref{thm:no_transitive_finite} is proved.

\medskip

\subsection{Proof of Theorem \ref{main_thm_2}.} \

\noindent
Let $\F$ be the set of $C^\infty$-smooth Finsler metrics on a given manifold
$M$ and let us consider the subset $\widetilde{\F} \subset \F$ characterized
by the following property: $\tilde{F} \in \widetilde{\F}$ if and only if there
exists a point $x_0 \in M$ such that the curvature vector fields and their
derivatives
\begin{equation}
  \label{eq:3_jet_curv}
  \left\{\mathcal R_{i j}, \ \nabla_{k} \mathcal R_{i j},  \
    \nabla_{k l}  \mathcal R_{ij},   \ 
    \nabla_{klh}  \mathcal R_{ij} \ |  \ 1 \leq i,j,k,l,h
    \leq  n \right\} \subset \X{\I_{x_0}},
\end{equation}
up to order 3 has the $3-$jet generating property at least at one point of the
indicatrix at $x_0$.  Then
\begin{enumerate}
\item [\emph{i)}] the holonomy group at $x_0$ of any $\tilde{F} \in \widetilde{\F} $
  is infinite dimensional,
\item [\emph{ii)}] the set $\widetilde{\F}$ is dense in $\F$ with respect to
  the $C^m$ topology for each $m\geq 8$ (in fact for any $m\ge 0$),
\item [\emph{iii)}] the set $\widetilde{\F}$ is open in $\F$ with respect the
  the $C^{\widetilde{m}}$ topology for $\widetilde{m}\geq 8$.
\end{enumerate}
Indeed, \emph{i)} follows from Theorem \ref{thm:no_transitive_finite}: the
infinitesimal holonomy algebra $\ihol_{x_0}(\tilde{F})$ is infinite
dimensional, consequently, the holonomy algebra and the holonomy group of
$\tilde{F}$ at $x_0$ are infinite dimensional.
   
In order to show \emph{ii)}, let us consider the Funk-perturbation $F_t$
given by \eqref{def:pert}, a point $x_0$ in $M$ and a point $y_0$ of the
indicatrix at $x_0$.  By Proposition \ref{prop:generating_at_t} there
exists a sufficiently small $t>0$ such that the curvature vector fields and
their derivatives up to order 3 has the $3-$jet generating property at
$y_0$.  For sufficiently small $t$ the metric $F_t$ is sufficiently close
to $F$ in $C^m$-topology.

In order to prove \emph{iii)}, we observe that the jet-generating condition is
an open condition, so if it is fulfilled at $y_0\in \mathcal{I}_{x_0}$, it is
fulfilled at any point $y_1\in \mathcal{I}_{x_1}$ sufficiently close to $y_0$
in $C^{m\ge 8}$-topology on $TM$.

Theorem \ref{main_thm_2} is proved.

\begin{remark}
  \label{rem:germs}
  In the proof of Theorem \ref{main_thm_2}, it was showed that for a
  convenient perturbation the infinitesimal holonomy algebra at a point is
  infinite-dimensional. This remains valid mircrolocally and on the level of
  germs.
\end{remark}

\bigskip

\end{document}